\newtheorem{thm}{Theorem}
\newenvironment{proof}{\begin{trivlist}
                       \item[]{\bf Proof.}
                       \hspace{0cm}}{\hfill $\Box$
                       \end{trivlist}}
\begin{document}
\title{Nonlinear differential inequality}

\author{N. S. Hoang$\dag$\footnotemark[1] \quad 
A. G. Ramm$\dag$\footnotemark[3]
\\
\\
$\dag$Mathematics Department, Kansas State University,\\
Manhattan, KS 66506-2602, USA
}

\renewcommand{\thefootnote}{\fnsymbol{footnote}}
\footnotetext[1]{Email: nguyenhs@math.ksu.edu}
\footnotetext[3]{Corresponding author. Email: ramm@math.ksu.edu}
\date{}
\maketitle

\begin{abstract} \noindent 
A nonlinear inequality is formulated in the paper.
An estimate of the rate of growth/decay of solutions to this inequality is 
obtained.
This inequality is of interest in a study of dynamical systems 
and nonlinear evolution equations. It can be applied to
a study of global existence of solutions to nonlinear PDE.

{\bf Keywords.}
Nonlinear inequality; Dynamical Systems Method (DSM); stability.

{\bf MSC:}
26D10, 37L05, 47J35, 65J15.
\end{abstract}

\section{Introduction}
In this paper the following nonlinear differential inequality 
\begin{equation}
\label{neq1} 
\dot{g}(t) \le -\gamma(t)g(t) + \alpha(t,g(t)) +
\beta(t),\qquad t\ge t_0, \quad \dot{g}=\frac{dg}{dt},\quad g\geq 0,
\end{equation} 
is studied. 
In equation \eqref{neq1}, $\beta(t)$ and 
$\gamma(t)$ are continuous
functions, defined on $[t_0,\infty)$, where $t_0\ge 0$ is a 
fixed
number. 

Inequality \eqref{neq1} was studied in \cite{R499} with 
$\alpha(t,y) = \tilde{\alpha}(t)y^2$, 
where $0\le \tilde{\alpha}(t)$ is a continuous function on $[t_0,\infty)$. 
This inequality arises in the study of the Dynamical Systems Method 
(DSM) for solving nonlinear operator equations.
Sufficient conditions on $\beta$, $\alpha$ and $\gamma$ which yields an 
estimate 
for the rate of growth/decay of $g(t)$ were given in \cite{R499}. 
A discrete analog of \eqref{neq1} was studied in \cite{R538}. 
An application to the study of a discrete version of the DSM for solving 
nonlinear equation was demonstrated in \cite{R538}. 

In \cite{R558} inequality \eqref{neq1} is studied in the case
$\alpha(t,y)=\tilde{\alpha}(t)y^p$, where $p>1$ and $0\le
\tilde{\alpha}(t)$ is a continuous function on $[t_0,\infty)$.  This
equality allows one to study the DSM under weaker
smoothness assumption on $F$ than in the cited works. It allows 
one to study
the convergence of the DSM under the assumption that $F'$ is locally
H\"{o}lder continuous.  An application to the study of large time behavior
of solutions to some partial differential equations was outlined in
\cite{R558}.

In this paper we assume that $0\le\alpha(t,y)$ is a nondecreasing function
of $y$ on $[0,\infty]$ and is continuous with respect to $t$ on 
$[t_0,\infty)$.  Under this
weak assumption on $\alpha$ and some assumptions on $\beta$ and 
$\gamma$,
we give an estimate for the rate of growth/decay of $g(t)$ as $t\to \infty$ in
Theorem~\ref{thm1}.  

A discrete version of \eqref{neq1} is
studied and the result is 
stated in Theorem~\ref{thm2}. In
Section~\ref{sec3} an application of inequality \eqref{neq1} to the study
of large time behavior of solutions to some partial equations is sketched.

\section{Main results}
\label{sec2}
Throughout the paper let us assume that the function $0\le \alpha(t,y)$ is  
locally Lipchitz-continuous, nondecreasing with respect to $y$, and is  
continuous with respect to $t$ on $[t_0,\infty)$.

\begin{thm}
\label{thm1}
Let $\beta(t)$ and $\gamma(t)$ be continuous 
functions on $[t_0,\infty)$.
Suppose there exists a function $\mu(t)>0$, $\mu\in C^1[t_0,\infty)$,
such that
\begin{align}
\label{1eq4}
\alpha\bigg{(}t,\frac{1}{\mu(t)}\bigg{)}+ \beta(t) &\le 
\frac{1}{\mu(t)}\bigg{[}\gamma -
\frac{\dot{\mu}(t)}{\mu(t)}\bigg{]}, \qquad t\geq t_0.
\end{align}
Let $g(t)\ge 0$ be a solution to inequality \eqref{neq1} such that
\begin{equation}
\label{1eq6}
\mu(t_0)g(t_0)    < 1.
\end{equation}
Then $g(t)$ exists globally and the following estimate holds:
\begin{equation}
\label{3eq10}
0\le g(t) < \frac{1}{\mu(t)},\qquad \forall t\ge t_0.
\end{equation}
Consequently, if $\lim_{t\to\infty} \mu(t)=\infty$, then
\begin{equation}
\lim_{t\to\infty} g(t)= 0.
\end{equation}
\end{thm}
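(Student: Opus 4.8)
The plan is to recognize that hypothesis \eqref{1eq4} asserts precisely that the barrier $h(t):=1/\mu(t)$ is a \emph{supersolution} of the ODE associated to \eqref{neq1}, while $g$ is a subsolution, and then to run a comparison (Gronwall-type) argument. First I would set $f(t,y):=-\gamma(t)y+\alpha(t,y)+\beta(t)$, so that \eqref{neq1} reads $\dot g\le f(t,g)$. Since $h=1/\mu$ gives $\dot h=-\dot\mu/\mu^2$, the right-hand side of \eqref{1eq4} equals $\gamma h+\dot h$, so \eqref{1eq4} rearranges to $\dot h\ge -\gamma h+\alpha(t,h)+\beta=f(t,h)$. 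Thus on the maximal interval of existence $[t_0,T)$ I have $\dot g\le f(t,g)$, $\dot h\ge f(t,h)$, and, by \eqref{1eq6}, $g(t_0)<h(t_0)$.

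Next I would prove the strict bound $g(t)<h(t)$ on $[t_0,T)$. Set $w:=g-h$, so $w(t_0)<0$, and suppose for contradiction that $w$ vanishes somewhere; let $\tau:=\inf\{t>t_0:w(t)=0\}$, so that $w(\tau)=0$ and $w<0$ on $[t_0,\tau)$. On the compact range of values $(t,g(t))$ and $(t,h(t))$ for $t\in[t_0,\tau]$, the standing assumptions ($\gamma$ continuous, $\alpha$ locally Lipschitz in $y$) furnish a constant $L$ with $f(t,y_1)-f(t,y_2)\le L|y_1-y_2|$. Subtracting the sub/super-solution inequalities gives $\dot w\le f(t,g)-f(t,h)\le L|w|=-Lw$ on $[t_0,\tau]$, since $w\le 0$ there. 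Hence $\frac{d}{dt}\big(e^{Lt}w\big)\le 0$, so $e^{L\tau}w(\tau)\le e^{Lt_0}w(t_0)<0$, contradicting $w(\tau)=0$. Therefore $g(t)<h(t)=1/\mu(t)$ throughout $[t_0,T)$, which is \eqref{3eq10}.

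For global existence I would argue that this a priori bound rules out finite-time blow-up: on any finite $[t_0,T']$ the continuous positive function $\mu$ is bounded below, so $0\le g<1/\mu$ keeps $g$ bounded, and the locally Lipschitz right-hand side then allows the solution to be continued past $T'$; hence $T=\infty$. The final assertion is then immediate from the squeeze $0\le g(t)<1/\mu(t)$: if $\mu(t)\to\infty$ then $1/\mu(t)\to 0$, forcing $g(t)\to 0$.

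I expect the main obstacle to be the strictness in the second step. The first-order ``barrier'' information alone — namely that at any contact point $g(\tau)=h(\tau)$ one has $\dot g(\tau)\le f(\tau,1/\mu(\tau))\le\dot h(\tau)$ directly from \eqref{1eq4}, so $\dot w(\tau)\le 0$ — does \emph{not} rule out a tangential touch followed by a re-crossing: a function behaving like $w(t)\sim(t-\tau)^3$ near $\tau$ satisfies the contact condition $\dot w(\tau)=0\le 0$ and still becomes positive. The essential use of the local Lipschitz hypothesis, through the integrating factor $e^{Lt}$, is exactly what upgrades the contact condition to the strict separation $w<0$, and this is the step where care is needed.
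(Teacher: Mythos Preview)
Your argument is correct, and it follows a genuinely different route from the paper's. You treat $h=1/\mu$ and $g$ directly as super- and subsolutions of $\dot y=f(t,y)$ and use the local Lipschitz hypothesis through a Gronwall step on $w=g-h$: from $\dot w\le L|w|=-Lw$ on $[t_0,\tau]$ you get $(e^{Lt}w)'\le 0$, which transports the strict initial gap to $\tau$ and excludes first contact. The paper instead removes the linear term by the substitution $v=g\,e^{\int_{t_0}^t\gamma}$, sets $\eta=e^{\int_{t_0}^t\gamma}/\mu$, and uses only the \emph{monotonicity} of $\alpha(t,\cdot)$ to deduce that $v\le\eta$ on an interval forces $\dot v\le\dot\eta$ pointwise there; integrating this over $[t_0,T]$ carries the strict inequality $v(t_0)<\eta(t_0)$ forward to $v(T)<\eta(T)$, and continuity then extends the interval, contradicting maximality of $T$. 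So the paper obtains the exclusion of contact without invoking Lipschitz, at the price of using the nondecreasing-in-$y$ assumption, while your proof uses Lipschitz but never monotonicity. Both hypotheses are standing assumptions here, so either route is legitimate; yours is the textbook ODE comparison principle, and --- as you rightly flag in your last paragraph --- it is exactly the Lipschitz integrating factor that upgrades the bare contact information $\dot w(\tau)\le 0$ to a genuine separation, whereas the paper's trick sidesteps that issue by securing $\dot v\le\dot\eta$ on the entire interval, not just at the contact point.
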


\begin{proof}
Denote 
\begin{equation}
\label{eqqe1}
v(t):=g(t)e^{\int_{t_0}^t\gamma(s)ds}.
\end{equation} 
Then inequality 
\eqref{neq1} takes the form
\begin{equation}
\label{eq6}
\dot{v}(t) \le a(t)\alpha\big{(}t,v(t)e^{-\int_{t_0}^t\gamma(s)ds}\big{)} 
+ b(t),\qquad v(t_0)=g(t_0):= g_0,
\end{equation}
where
\begin{equation}
a(t) = e^{\int_{t_0}^t\gamma(s)ds},\qquad b(t):=\beta(t) e^{\int_{t_0}^t\gamma(s)ds}.
\end{equation}
Denote 
\begin{equation}
\label{eq8}
\eta(t) = \frac{e^{\int_{t_0}^t\gamma(s)ds}}{\mu(t)}.
\end{equation}
From inequality \eqref{1eq6} and relation \eqref{eq8} one gets
\begin{equation}
\label{eq9}
v(t_0)=g(t_0) < \frac{1}{\mu(t_0)}=\eta(t_0).
\end{equation}
It follows from the inequalities \eqref{1eq4}, \eqref{eq6} 
and \eqref{eq9} that
\begin{equation}
\label{eq10}
\begin{split}
\dot{v}(t_0) &\le \alpha(t_0,\frac{1}{\mu(t_0)})
+ \beta(t_0)
\le \frac{1}{\mu(t_0)}\bigg{[}\gamma -
\frac{\dot{\mu}(t_0)}{\mu(t_0)}\bigg{]}
 = \frac{d}{dt} \frac{e^{\int_{t_0}^t\gamma(s)ds}}
{\mu(t)}\bigg{|}_{t=t_0} = \dot{\eta}(t_0).
\end{split}
\end{equation}
From the inequalities \eqref{eq9} and \eqref{eq10} it follows 
that there exists $\delta>0$ such that
\begin{equation}
v(t) < \eta(t),\qquad  t_0\le t \le t_0 + \delta.
\end{equation}
To continue the proof we need two Claims.

{\it Claim 1.}  {\it If} 
\begin{equation}
v(t) \le \eta(t),\qquad \forall t \in[ t_0, T],\quad T>t_0,
\end{equation}
{\it then} 
\begin{equation}
\dot{v}(t) \le \dot{\eta}(t),\qquad  \forall t \in[ t_0, T].
\end{equation}
{\it Proof of Claim 1.}
 
It follows from inequalities \eqref{1eq4}, \eqref{eq6} 
and the inequality $v(T)\leq \eta(T)$,  that
\begin{equation}
\label{eq14}
\begin{split}
\dot{v}(t) &\le 
e^{\int_{t_0}^t\gamma(s)ds}\alpha(t,\frac{1}{\mu(t)})
+ \beta(t)e^{\int_{t_0}^t\gamma(s)ds}\\
&\le \frac{e^{\int_{t_0}^t\gamma(s)ds}}{\mu(t)}
\bigg{[}\gamma -\frac{\dot{\mu}(t)}{\mu(t)}\bigg{]}\\
& = \frac{d}{dt} \frac{e^{\int_{t_0}^t\gamma(s)ds}}
{\mu(t)}\bigg{|}_{t=t} = \dot{\eta}(t),\qquad \forall t\in [t_0,T].
\end{split}
\end{equation}
{\it Claim 1} is proved. $\hfill$ $\Box$

Denote 
\begin{equation}
\label{eq12}
T:=\sup \{\delta \in\mathbb{R}^+: v(t) < \eta(t),\, 
\forall t \in[t_0, t_0 + \delta]\}.
\end{equation}
{\it Claim 2.}  {\it One has $T=\infty$.} 

Claim 2 says that every nonnegative solution $g(t)$ to inequality (1),
satisfying assumption (3),  is defined 
globally.

{\it Proof of Claim 2.}

Assume the contrary, i.e.,  $T<\infty$. The solution $v(t)$
to \eqref{eq6}
is continuous at every point $t$ at which it is bounded.
From the definition of $T$ and the continuity of $v$ and $\eta$ one 
gets
\begin{equation}
\label{eq13}
v(T) \le \eta(T).
\end{equation}
It follows from 
inequalities \eqref{eq12}, \eqref{eq13},  and {\it Claim 1} that 
\begin{equation}
\label{2eq18}
\dot{v}(t)\le \dot{\eta}(t),\qquad \forall t\in [t_0,T].
\end{equation}
This implies 
\begin{equation}
\label{2eq19}
v(T)-v(t_0) = \int_{t_0}^T \dot{v}(s)ds \le \int_{t_0}^T \dot{\eta}(s)ds 
= \eta(T)-\eta(t_0).
\end{equation}
Since $v(t_0)<\eta(t_0)$ by assumption \eqref{1eq6}, it follows from inequality \eqref{2eq19} that
\begin{equation}
\label{eq20}
v(T) < \eta(T).
\end{equation}
Inequality \eqref{eq20} and inequality \eqref{2eq18} with $t=T$ imply that
 there exists a $\delta>0$ such that
\begin{equation}
v(t) < \eta(t),\qquad  T\le t \le T + \delta.
\end{equation}
This contradicts the definition of $T$ in \eqref{eq12}, and the 
contradiction proves the desired conclusion $T=\infty$. 

Claim 2 is proved. $\hfill$ $\Box$ 

It follows from the definitions of $\eta(t)$, $T$,  $v(t)$, and from 
the relation $T=\infty$, that
\begin{equation}
g(t) = e^{-\int_{t_0}^t\gamma(s)ds} v(t)< 
e^{-\int_{t_0}^t\gamma(s)ds}\eta(t) = \frac{1}{\mu(t)},\qquad \forall t> t_0.
\end{equation}
Theorem~\ref{thm1} is proved.
\end{proof}

\begin{thm} \label{theoremeq} 
Let $\beta(t)$ and $\gamma(t)$ be as in
Theorem~\ref{thm1}.  Assume that $0<\alpha(t,y)$ is continuous with
respect to $t$ on $[t_0,\infty)$, is locally Lipschitz-continuous and
nondecreasing with respect to $y$ on $[0,\infty)$.  Let $0\le g(t)$
satisfy \eqref{neq1} and $0<\mu(t)$ satisfy \eqref{1eq4} and 
\eqref{1eq6}.
Then \begin{equation} \label{eq.2} g(t)\le
\frac{1}{\mu(t)},\qquad \forall t\ge t_0. \end{equation} \end{thm}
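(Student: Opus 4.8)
The plan is to deduce Theorem~\ref{theoremeq} from Theorem~\ref{thm1}, the only real issue being the passage from the strict bound \eqref{3eq10} to the closed bound \eqref{eq.2}. Indeed, whenever $\mu(t_0)g(t_0)<1$ there is nothing to do: the hypotheses of Theorem~\ref{thm1} are met (note $0<\alpha$ implies $0\le\alpha$), so it already delivers the stronger inequality $g(t)<1/\mu(t)$, hence \eqref{eq.2}. The entire content is therefore the borderline situation $\mu(t_0)g(t_0)=1$ that the closed inequality is meant to cover, and I would isolate it at the outset.

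First I would reuse the change of variables from the proof of Theorem~\ref{thm1}: set $v(t)=g(t)e^{\int_{t_0}^t\gamma(s)ds}$ and $\eta(t)=e^{\int_{t_0}^t\gamma(s)ds}/\mu(t)$, so that \eqref{eq6} reads $\dot v(t)\le f(t,v(t))$ with $f(t,y):=a(t)\alpha\big(t,ye^{-\int_{t_0}^t\gamma(s)ds}\big)+b(t)$; that is, $v$ is a subsolution of the scalar equation $\dot y=f(t,y)$. The key algebraic point, already implicit in \eqref{eq10} and \eqref{eq14}, is that assumption \eqref{1eq4} is exactly the statement that $\eta$ is a supersolution, $\dot\eta(t)\ge f(t,\eta(t))$, while \eqref{1eq6} gives the initial relation $v(t_0)\le\eta(t_0)$.

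Then I would invoke the comparison principle for scalar ODEs, which applies because $\alpha(t,\cdot)$ is nondecreasing (so $f(t,\cdot)$ is nondecreasing) and locally Lipschitz (so $f(t,\cdot)$ is locally Lipschitz): a subsolution starting at or below a supersolution stays below it. Concretely, if $\phi:=v-\eta$ were positive at some point, let $\tau$ be the last preceding time with $\phi(\tau)=0$; on the interval where $\phi>0$ one has $\dot\phi\le f(t,v)-f(t,\eta)\le L\,\phi$, where monotonicity makes the difference nonnegative and the local Lipschitz property bounds it, with $L$ a Lipschitz constant valid on the compact range of $v$ and $\eta$ over $[t_0,t_1]$ (finite since both functions are continuous there). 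Gronwall's inequality then forces $\phi\le 0$, a contradiction, so $v(t)\le\eta(t)$ for all $t$, which is precisely \eqref{eq.2}, with no strictness required in \eqref{1eq6}.

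The main obstacle is the borderline case $v(t_0)=\eta(t_0)$, where the strict mechanism of Theorem~\ref{thm1} genuinely breaks: \eqref{eq10} then yields only $\dot v(t_0)\le\dot\eta(t_0)$, so one cannot open an initial gap and re-run the $T=\sup$ argument. The tempting repair of subtracting a small multiple of $e^{-\int_{t_0}^t\gamma(s)ds}$ from $g$ to restore the strict condition \eqref{1eq6} fails, because the monotonicity of $\alpha$ then moves the $\alpha$-term the wrong way and destroys \eqref{neq1}; rescaling $\mu$ is no better, since it need not preserve \eqref{1eq4}. This is exactly why I would route the borderline case through the one-sided Gronwall/comparison estimate above, where local Lipschitz continuity of $\alpha$ does the essential work, rather than through any perturbation fed back into Theorem~\ref{thm1}.
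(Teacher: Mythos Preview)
Your proposal is correct, and you were right to isolate the borderline case $\mu(t_0)g(t_0)=1$: the paper's own proof is only non-trivial there, and indeed only makes sense if one reads the hypothesis \eqref{1eq6} in Theorem~\ref{theoremeq} as the closed inequality $\mu(t_0)g(t_0)\le 1$, since otherwise Theorem~\ref{thm1} already yields the stronger strict conclusion. Your route, however, differs from the paper's. You run a one-sided Gronwall/comparison argument directly on $\phi=v-\eta$, using the local Lipschitz bound to control $f(t,v)-f(t,\eta)$ on any interval where $\phi>0$. The paper instead perturbs at the level of the associated ODE: it solves $\dot w_n=f(t,w_n)$ with initial data $w_n(t_0)=g(t_0)-\tfrac{1}{n}$, applies the strict argument of Theorem~\ref{thm1} to each $w_n$ (now with a genuine initial gap), passes to the limit $w=\lim_n w_n$, and finally invokes a cited comparison lemma to get $v\le w\le \eta$. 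Both approaches lean on the local Lipschitz hypothesis, but in different places: yours uses it once in the Gronwall step, while the paper uses it for local existence/uniqueness of the $w_n$ and for the comparison $v\le w$. Your argument is shorter and self-contained; the paper's has the virtue of recycling Theorem~\ref{thm1} verbatim and outsourcing the delicate step to a standard lemma.
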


\begin{proof}
Let $v(t)$ be defined in \eqref{eqqe1}. Then inequality \eqref{eq6} holds. 
%
Let $w_n(t)$ solve the following differential equation 
\begin{equation}
\label{eq.4}
\dot{w}_n(t) = a(t)\alpha\big{(}t,w_n(t)e^{-\int_{t_0}^t\gamma(s)ds}\big{)} 
+ b(t), \qquad w_n(t_0) = g(t_0) - \frac{1}{n},\qquad n\ge n_0,
\end{equation}
where $n_0$ is sufficiently large and $g(t_0) >\frac{1}{n_0}$. 
Since $\alpha(t,y)$ is continuous with respect to $t$ and 
locally Lipschitz-continuous with respect to $y$, 
there exists a unique local solution to \eqref{eq.4}.

From the proof of Theorem~\ref{thm1} one gets
\begin{equation}
\label{eq.5}
w_n(t) < \frac{e^{\int_{t_0}^t\gamma(s)ds}}{\mu(t)},
\qquad \forall t\ge t_0, \forall n\ge n_0.
\end{equation}
Let $t_0<\tau<\infty$ be an arbitrary constant and 
\begin{equation}
\label{eq.6}
w(t) = \lim_{n\to\infty} w_n(t),\qquad \forall t \in [t_0,\tau].
\end{equation}
This and the fact that $w_n(t)$ is uniformly continuous 
on $[0,\tau]$ imply that $w(t)$ solves the following equation:
\begin{equation}
\label{eq.7}
\dot{w}(t) = a(t)\alpha\big{(}t,w(t)e^{-\int_{t_0}^t\gamma(s)ds}\big{)} 
+ b(t), \qquad w(t_0) = g(t_0),\qquad \forall t\in [0,\tau]. 
\end{equation}
Note that the solution $w(t)$ to \eqref{eq.7} is unique since 
$\alpha(t,y)$ is continuous with respect to $t$ and 
locally Lipschitz-continuous with respect to $y$. 
From \eqref{eq6}, \eqref{eq.7}, 
a comparison lemma (see, e.g.,\,\cite{R499},\, p.99), 
 the continuity of $w_n(t)$ with respect to $w_0(t_0)$ on
 $[0,\tau]$, and \eqref{eq.5}, one gets
\begin{equation}
\label{eq.8}
v(t) \le w(t) \le \frac{e^{\int_{t_0}^t\gamma(s)ds}}{\mu(t)},
\qquad \forall t\in [t_0, \tau],\,\,  \forall n\ge n_0.
\end{equation} 
Since $\tau> t_0$ is arbitrary, inequality \eqref{eq.2} follows from 
\eqref{eq.8}. 

Theorem~\ref{theoremeq} is proved. 
\end{proof}

Let us consider a {\it discrete analog} of Theorem \ref{thm1}.

Let
$$
\frac{g_{n+1}-g_n}{h_n}\le -\gamma_n g_n+\alpha(n,g_n) +
\beta_n,\qquad h_n > 0,\quad 0< h_n\gamma_n < 1,\quad p>1,
$$
and the inequality:
$$
g_{n+1} \le (1-\gamma_n) g_n + \alpha(n,g_n) + \beta_n,\quad n\ge 0,
\qquad 0<\gamma_n<1,\quad p>1,
$$
where $g_n, \beta_n$ and $\gamma_n$ are positive sequences
of real numbers.

Under suitable assumptions on $\beta_n$ and $\gamma_n$, 
we obtain an upper bound for $g_n$ as $n\to\infty$. In particular, we give 
sufficient conditions for the validity of the relation 
$\lim_{n\to\infty}g_n=0$, and estimate the rate 
of growth/decay of $g_n$ as $n\to\infty$. This result can be used in a study of 
evolution problems, 
in a study of iterative processes, and in a study of nonlinear PDE.

\begin{thm}
\label{thm2}
Let $\beta_n,$ and $g_n$ be nonnegative sequences 
of numbers. Assume that
\begin{equation}
\label{eq1}
\begin{split}
\frac{g_{n+1}-g_n}{h_n}&\le -\gamma_n g_n+
\alpha(n,g_n) +\beta_n,\qquad h_n > 0,\quad 0< h_n\gamma_n < 1,
\end{split}
\end{equation}
or, equivalently, 
\begin{equation}
\qquad g_{n+1}\le g_n(1-h_n\gamma_n) +
 h_n \alpha(n,g_n)+h_n\beta_n,\qquad h_n > 0,\quad 0< h_n\gamma_n < 1.
\end{equation}
If there is a sequence of positive numbers 
$(\mu_n)_{n=1}^\infty$, such that the following conditions hold:
\begin{align}
\label{eq3}
\alpha(n,\frac{1}{\mu_n})+\beta_n&\le \frac{1}{\mu_n}\bigg{(}\gamma_n -
\frac{\mu_{n+1}-\mu_n}{\mu_n h_n}\bigg{)},\\
\label{eq2}
g_0&\le\frac{1}{\mu_0},
\end{align}
then
\begin{equation}
\label{eq5}
0\leq g_n\le\frac{1}{\mu_n} \qquad \forall n\ge 0.
\end{equation}
Therefore, if $\lim_{n\to\infty}\mu_n =\infty$, then $\lim_{n\to\infty} 
g_n = 0$.
\end{thm}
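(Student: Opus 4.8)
The plan is to prove Theorem~\ref{thm2} by induction on $n$, mirroring the continuous argument of Theorem~\ref{thm1} but replacing the differential comparison with a direct discrete step. I will show that the hypothesis $g_0 \le \frac{1}{\mu_0}$ propagates: assuming $g_n \le \frac{1}{\mu_n}$ for some $n \ge 0$, I will establish $g_{n+1} \le \frac{1}{\mu_{n+1}}$. This single inductive step, together with the base case \eqref{eq2}, yields \eqref{eq5} for all $n$.

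First I would start from the equivalent recursive form
\begin{equation*}
g_{n+1} \le g_n(1 - h_n\gamma_n) + h_n\,\alpha(n, g_n) + h_n\beta_n,
\end{equation*}
and use the inductive hypothesis $g_n \le \frac{1}{\mu_n}$ to bound the right-hand side. The crucial point is that the coefficient $1 - h_n\gamma_n$ is \emph{nonnegative}, since $0 < h_n\gamma_n < 1$; hence the term $g_n(1-h_n\gamma_n)$ is monotone in $g_n$ and can be replaced by $\frac{1}{\mu_n}(1 - h_n\gamma_n)$. Likewise, because $\alpha(n,\cdot)$ is nondecreasing in its second argument, $\alpha(n,g_n) \le \alpha(n, \frac{1}{\mu_n})$. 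Combining these gives
\begin{equation*}
g_{n+1} \le \frac{1}{\mu_n}(1 - h_n\gamma_n) + h_n\,\alpha\!\left(n, \tfrac{1}{\mu_n}\right) + h_n\beta_n.
\end{equation*}

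Next I would invoke the structural hypothesis \eqref{eq3}, which bounds $\alpha(n,\frac{1}{\mu_n}) + \beta_n$ by $\frac{1}{\mu_n}\big(\gamma_n - \frac{\mu_{n+1}-\mu_n}{\mu_n h_n}\big)$. Substituting this into the estimate and simplifying, the $\gamma_n$-terms should cancel and I expect the right-hand side to collapse to $\frac{1}{\mu_n} - \frac{\mu_{n+1}-\mu_n}{\mu_n^2} = \frac{1}{\mu_n}\big(2 - \frac{\mu_{n+1}}{\mu_n}\big)$. The remaining task is to verify that this quantity is at most $\frac{1}{\mu_{n+1}}$, i.e.\ that $\frac{1}{\mu_n}(2 - \frac{\mu_{n+1}}{\mu_n}) \le \frac{1}{\mu_{n+1}}$, which reduces to $(\mu_{n+1}-\mu_n)^2 \ge 0$ after clearing positive denominators.

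I expect this last inequality—the elementary estimate $2\mu_n\mu_{n+1} - \mu_{n+1}^2 \le \mu_n^2$, equivalent to $0 \le (\mu_{n+1}-\mu_n)^2$—to be the main (and only genuine) obstacle, in the sense that it is where one must be careful: it is the discrete manifestation of the fact that the linearization used in \eqref{eq3} only controls the first-order change of $\frac{1}{\mu}$, and the convexity of $x \mapsto \frac{1}{x}$ must supply the missing second-order slack. Everything else is routine algebra, and the two sign conditions ($1 - h_n\gamma_n \ge 0$ and $\mu_n > 0$) guarantee that the monotonicity substitutions and the denominator-clearing are valid. Once the inductive step is complete, the conclusion \eqref{eq5} follows immediately, and the final assertion $\lim_{n\to\infty} g_n = 0$ when $\mu_n \to \infty$ is then a direct consequence of the squeeze $0 \le g_n \le \frac{1}{\mu_n}$.
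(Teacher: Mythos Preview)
Your proposal is correct and follows essentially the same route as the paper's proof: induction on $n$, using $1-h_n\gamma_n\ge 0$ and the monotonicity of $\alpha(n,\cdot)$ to replace $g_n$ by $\frac{1}{\mu_n}$, then invoking \eqref{eq3} so that the $\gamma_n$-terms cancel and the remainder collapses to $\frac{2\mu_n-\mu_{n+1}}{\mu_n^2}$, which is $\le \frac{1}{\mu_{n+1}}$ precisely because $(\mu_{n+1}-\mu_n)^2\ge 0$. The paper records this last step in the equivalent form $\frac{1}{\mu_{m+1}}-\frac{(\mu_{m+1}-\mu_m)^2}{\mu_m^2\mu_{m+1}}\le \frac{1}{\mu_{m+1}}$, but the content is identical.
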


\begin{proof}
Let us prove \eqref{eq5} by induction. Inequality \eqref{eq5} holds for $n=0$ by assumption \eqref{eq2}. 
Suppose that \eqref{eq5} holds for all $n\le m$. From inequalities 
\eqref{eq1}, \eqref{eq3}, and from the induction 
hypothesis 
$g_n\le\frac{1}{\mu_n}$, $n\le m$, one gets
\begin{equation}
\begin{split}
g_{m+1}&\le g_m(1-h_m\gamma_m) + h_m \alpha(m,g_m) + h_m\beta_m\\
\le& \frac{1}{\mu_m}(1-h_m\gamma_m)+ h_m\alpha(m,\frac{1}{\mu_m})+ h_m\beta_m\\
\le& \frac{1}{\mu_m}(1-h_m\gamma_m) + \frac{h_m}{\mu_m}\bigg{(}\gamma_m -
\frac{\mu_{m+1}-\mu_m}{\mu_m h_m}\bigg{)}\\
=& \frac{1}{\mu_{m+1}}- \frac{\mu_{m+1}^2 -2\mu_{m+1}\mu_{m}+\mu_m^2}{\mu_n^2 \mu_{m+1}} 
\le\frac{1}{\mu_{m+1}}.
\end{split}
\end{equation}
Therefore, inequality \eqref{eq5} holds for $n=m+1$. 
Thus, inequality \eqref{eq5} holds for all $n\ge 0$ by induction. 
Theorem~\ref{thm2} is proved.
\end{proof}

Setting $h_n =1$ in Theorem~\ref{thm2}, one obtains the following result:
\begin{thm}
\label{cor1}
Let $\beta_n,\gamma_n$ and $g_n$ be sequences of nonnegative 
numbers, and
\begin{equation}
\label{2eq1}
\begin{split}
g_{n+1}&\le g_n(1-\gamma_n) +\alpha(n,g_n)+\beta_n,\qquad 0<\gamma_n <1.
\end{split}
\end{equation}
If there is sequence $(\mu_n)_{n=1}^\infty>0$ such 
that the following conditions hold
\begin{align}
g_0\le\frac{1}{\mu_0},\qquad\alpha(n,\frac{1}{\mu_n})+\beta_n&\le \frac{1}{\mu_n}\bigg{(}\gamma_n -
\frac{\mu_{n+1}-\mu_n}{\mu_n h_n}\bigg{)},\qquad \forall n\ge 0,
\end{align}
then
\begin{equation}
\label{2eq5}
g_n\le\frac{1}{\mu_n}, \qquad \forall n\ge 0.
\end{equation}
\end{thm}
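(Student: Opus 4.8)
The plan is to recognize Theorem~\ref{cor1} as the special case $h_n=1$ of Theorem~\ref{thm2}, so that no genuinely new argument is needed: the entire content is to verify that every hypothesis specializes correctly and then to quote the conclusion already proved. First I would put $h_n=1$ throughout Theorem~\ref{thm2} and check the structural constraints. The requirement $0<h_n\gamma_n<1$ collapses to $0<\gamma_n<1$, which is exactly the assumption attached to \eqref{2eq1}. The equivalent form of \eqref{eq1}, namely $g_{n+1}\le g_n(1-h_n\gamma_n)+h_n\alpha(n,g_n)+h_n\beta_n$, becomes $g_{n+1}\le g_n(1-\gamma_n)+\alpha(n,g_n)+\beta_n$, i.e.\ precisely \eqref{2eq1}. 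Likewise, condition \eqref{eq3} evaluated at $h_n=1$ is the bound on $\alpha(n,1/\mu_n)+\beta_n$ in the hypotheses, and \eqref{eq2} is the initial inequality $g_0\le 1/\mu_0$.

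Once every hypothesis of Theorem~\ref{thm2} is seen to hold, I would simply invoke its conclusion \eqref{eq5}, which asserts $0\le g_n\le 1/\mu_n$ for all $n\ge 0$, and this is exactly \eqref{2eq5}. This finishes the argument.

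As a self-contained alternative (should one prefer not to reduce to Theorem~\ref{thm2}), I would replay the single inductive step with $h_m=1$. The base case $n=0$ is the hypothesis $g_0\le 1/\mu_0$. For the inductive step, assuming $g_m\le 1/\mu_m$, the recurrence \eqref{2eq1} together with the monotonicity of $\alpha$ in $y$ and the hypothesis on $\alpha(m,1/\mu_m)+\beta_m$ gives
\begin{equation}
g_{m+1}\le \frac{1}{\mu_m}(1-\gamma_m)+\frac{1}{\mu_m}\Big(\gamma_m-\frac{\mu_{m+1}-\mu_m}{\mu_m}\Big)
=\frac{2\mu_m-\mu_{m+1}}{\mu_m^2}
=\frac{1}{\mu_{m+1}}-\frac{(\mu_{m+1}-\mu_m)^2}{\mu_m^2\,\mu_{m+1}}\le\frac{1}{\mu_{m+1}},
\end{equation}
the last inequality holding because $(\mu_{m+1}-\mu_m)^2\ge 0$ and $\mu_m,\mu_{m+1}>0$. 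This closes the induction.

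There is essentially no obstacle here: the result is a pure specialization, and the only point demanding attention is a notational one. As typeset, the condition in Theorem~\ref{cor1} still displays $\mu_n h_n$ in the denominator; under the substitution $h_n=1$ this must be read as $\mu_n$, so that the bound is $\alpha(n,1/\mu_n)+\beta_n\le \frac{1}{\mu_n}\big(\gamma_n-\frac{\mu_{n+1}-\mu_n}{\mu_n}\big)$. I would correct this for consistency, but it does not affect the argument in either form above.
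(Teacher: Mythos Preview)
Your proposal is correct and matches the paper's own treatment exactly: the paper simply states that Theorem~\ref{cor1} is obtained by setting $h_n=1$ in Theorem~\ref{thm2}, with no separate proof given. Your additional self-contained inductive replay and the remark about the stray $h_n$ in the displayed hypothesis are both accurate and go slightly beyond what the paper provides.
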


\section{Applications}
\label{sec3}

Here we  sketch an idea for possible applications of our 
inequalities
in a study of dynamical systems in a Hilbert space $H$.

In this Section we assume without loss of generality that $t_0=0$. Let   
\begin{equation}
\label{eqr1}
\dot{u} +Au=  h(t,u) + f(t),\qquad u(0)= u_0,
\quad \dot{u}:=\frac{d u}{d t},\quad t\ge 0.
\end{equation}
To explain the ideas, let us make simplifying assumptions:  $A >0$ is a 
selfadjoint 
time-independent operator in a real Hilbert 
space $H$, 
$h(t,u)$ is a nonlinear operator in $H$, locally Lipschitz with 
respect to $u$
and continuous with respect to $t\in \mathbb{R}_+:=[0,\infty)$,
and $f$ is a continuous function on $\mathbb{R}_+$ with values in $H$,
$\sup_{t\geq 0}\|f(t)\|<\infty$.
The scalar product in $H$ is denoted $\langle u,v \rangle$.
Assume that
\begin{equation}
\label{eqr2}
\langle Au,u \rangle \ge \gamma \langle u,u \rangle, \quad 
\gamma=const>0,
\quad \|h(t,u)\| \le \alpha(t,\|u\|), \qquad 
\forall u\in D(A),
\end{equation}
where 
$\alpha(t,y)\leq c|y|^p$, $p>1$ and $c>0$ are  constants, 
and $\alpha(t,y)$ is a  non-decreasing $C^1([0,\infty))$ function of 
$y$. Our approach works when $\gamma=\gamma(t)$ and $c=c(t)$, see 
Examples 1,2 below.
The problem is to estimate the behavior of the solution to \eqref{eqr1} 
as
$t\to\infty$ and to give sufficient conditions for a global
existence of the unique solution to \eqref{eqr1}. Our approach consists of
a reduction of this problem to the inequality \eqref{neq1} and an
application of Theorem \ref{thm1}. A different approach, studied in
the literature (see, e.g., \cite{K},  \cite{P}), is based on the semigroup 
theory.

Let $g(t):=\|u(t)\|$. Problem \eqref{eqr1} has a unique local solution 
under our assumptions. This local solution exists globally if
$\sup_{t\geq 0}\|u(t)\|<\infty$.
Multiply \eqref{eqr1} by $u$ 
and use \eqref{eqr2} to get
\begin{equation}
\label{eqr3}
\dot{g}g\le -\gamma(t)g^2 + \alpha(t,g) g + \beta(t)g,
\qquad \beta(t):=\|f(t)\|.
\end{equation}
Since $g\ge 0$, one gets
\begin{equation}
\label{eqr4}
\dot{g} \le -\gamma(t)g +  \alpha(t,g(t)) + \beta(t).
\end{equation}

Now Theorem~\ref{thm1} is applicable and yields 
sufficient conditions \eqref{1eq4} and \eqref{1eq6} for the global
existence of the solution to \eqref{eqr1} and estimate \eqref{3eq10} for the 
behavior of $\|u(t)\|$ as $t\to\infty$. The choice of $\mu(t)$ in
Theorem~\ref{thm1} is often straightforward. For example, if
$\alpha(t,g(t))=\frac{c_0}{a(t)} g^2$, where $\lim_{t \to \infty} a(t)=0$, 
$\dot{a}(t)<0$, then one can often choose $\mu(t)=\frac {\lambda}{a(t)}$,
$\lambda=const>0$, see \cite{R499}, p.116, and \cite{R549}, p.487, for  
examples of applications of this approach. 

The outlined approach is applicable to stability of the 
solutions to nonlinear differential equations, 
to semilinear parabolic problems, to hyperbolic problems, and
other problems. There is a large literature on the stability
of the solutions to differential equations (see, e.g., \cite{DK}, 
\cite{D}, and references therein).
Our approach yields some novel results. If the selfadjoint operator 
$A$ depends on $t$, $A=A(t)$, and $\gamma=\gamma(t)>0$,
$\lim_{t\to \infty}\gamma(t)=0$, 
one can treat problems with degenerate, as $t\to \infty$, 
elliptic operators $A$. 

For instance, if the operator $A$
is a second-order elliptic operator with matrix $a_{ij}(x,t)$,
and the minimal eigenvalue  $\lambda(x,t)$ of this matrix 
satisfies the condition $\min_{x}\lambda(x,t):=\gamma (t)\to 0$ 
as $t\to \infty$, then Theorem~\ref{thm1} is applicable
under suitable assumptions on $\gamma(t)$, $h(t,u)$ and $f(t)$.

{\it Example 1.} Consider 
\begin{equation}
\label{eqr5}
\dot{u}= - \gamma(t) u +  a(t)u(t)|u(t)|^p + \frac 1{(1+t)^q},\qquad 
u(0)=0,
\end{equation}
where $\gamma(t)=\frac c{(1+t)^b}$, $a(t)=\frac 1{(1+t)^m}$,  $p$, $q$, 
$b$, $c$, and $m$ are positive constants.
Our goal is to give sufficient conditions for the solution to the above 
problem to converge to zero as $t\to \infty$.
Multiply   \eqref{eqr5} by $u$, denote $g:=u^2$, and get the following 
inequality
\begin{equation}
\label{eqr6}
\dot{g}\leq -2 \frac{c}{(1+t)^b} g +  2\frac{1}{(1+t)^m}g(t)^{1+0.5p} + 
2\frac{1}{(1+t)^q}g^{0.5}, \qquad g=u^2.
\end{equation}
Choose $\mu(t)=\lambda(1+t)^{\nu}$, where $\lambda>0$ and $\nu>0$
are constants. 

Inequality  \eqref{1eq4} takes the form:
\begin{equation}
\label{eqr7}
\begin{split}
\frac 
2{(1+t)^m}[\lambda(1+t)^{\nu}]^{-1-0.5p}&+\frac2{(1+t)^q}[\lambda(1+t)^{\nu}]^
{-0.5}\\
&\le 
[\lambda(1+t)^{\nu}]^{-1}\bigg{(}2\frac c{(1+t)^b}-\frac \nu{1+t}\bigg{)}. 
\end{split}
\end{equation}
Choose $p,q,m,c, \lambda$ and $\nu$ so that 
inequality \eqref{eqr7} be valid and $\lambda u(0)^2<1$, so that
condition \eqref{1eq6} with $t_0=0$ holds. If this 
is done,
then $u^2(t)\leq \frac 1 {\lambda(1+t)^{\nu}}$, so $\lim_{t\to 
\infty}u(t)=0$.
For example, choose  $b=1$, $\nu=1$, $q=1.5$, $m=1$, $\lambda=4$,
 $c=4$, $p\geq 1$. Then inequality \eqref{eqr7} is valid, and if 
$u(0)^2<1/4$, then \eqref{1eq6} with $t_0=0$ holds, so
$\lim_{t\to \infty}u(t)=0$. The choice of the parameters can be varied.
In particular, the nonlinearity growth, governed by $p$, can be arbitrary
in power scale. If $b=1$ then three inequalities $m+0.5 p\nu\ge 1$, 
$q-0.5\nu\ge 1$, and $\lambda^{1/2}+\lambda^{-0.5p}\le c-0.5\nu$ 
together with $u(0)^2<\lambda^{-1}$ are sufficient for \eqref{1eq6}
and \eqref{eqr7} to hold, so they imply $\lim_{t\to \infty}u(t)=0$.

{\it Example 2.} 
Consider problem \eqref{eqr1} with $A, h$ and $f$ satisfying 
\eqref{eqr2} with $\gamma\equiv 0$. 
So one gets inequality \eqref{eqr4} with $\gamma(t)\equiv 0$. Choose
\begin{equation}
\label{eqr10}
\mu(t):= c + \lambda(1+ t)^{-b},\qquad c>0,\, b>0,\, \lambda>0,
\end{equation}
where $c, \lambda, $ and $b$ are constants.
Inequality  \eqref{1eq4} takes the form:
\begin{equation}
\label{eqr11}
\alpha(t, \frac{1}{\mu(t)}) + \beta(t) \le \frac{1}
{\mu(t)}\frac{b\lambda }{(1+t)[\lambda + c(1+t)^b]}.
\end{equation}
Let $\theta\in (0,1)$, $p>0$, and $C>0$ be constants. Assume that 
\begin{equation}
\label{eqr12}
\alpha(t,|y|) \le \theta C |y|^p\frac{b\lambda }{(\lambda + c)
(1+t)^{1+b}},\qquad \beta(t) \le (1-\theta)\frac{b\lambda }{(c+\lambda)^2(1+t)^{1+b}},
\end{equation}
for all $t\ge 0$, and 
\begin{equation}
\label{eqex1}
C  = 
\left \{\begin{matrix} c^{p-1} &\quad \text{if}\qquad p>1,\\
 (\lambda+c)^{p-1}&\quad \text{if}\qquad p\le 1.
 \end{matrix} \right .
\end{equation}
Let us verify that inequality \eqref{eqr11} holds given that \eqref{eqr12} and \eqref{eqex1} hold. 

It follows from \eqref{eqr10} that $c<\mu(t)\le c+\lambda$, $\forall t\ge 0$. 
This and \eqref{eqr12} imply
\begin{equation}
\label{eqtt1}
\beta(t)\le (1-\theta) 
\frac{1}{(c+\lambda)^2(1+t)^{1+b}}\le (1-\theta)\frac{1}{\mu(t)} \frac{1}{(1+t)(c+\lambda(1+t)^b)}.
\end{equation}
From \eqref{eqex1} and \eqref{eqr10} one gets
\begin{equation}
\label{eqt3}
\frac{C}{\mu^{p-1}(t)} \le C \max (c^{1-p}, (c+\lambda)^{1-p}) \le 1,\qquad \forall t\ge 0.
\end{equation}
From \eqref{eqr12} and \eqref{eqt3} one obtains
\begin{equation}
\label{eqt2}
\begin{split}
\alpha(t,\frac{1}{\mu(t)})
&\le
\theta C \frac{1}{\mu(t)} \frac{1}{\mu^{p-1}(t)} \frac{b\lambda }{(1+t)[ \lambda(1+t)^{b} + c
(1+t)^{b}]}\\
&\le \theta 
\frac{1}{\mu(t)} \frac{b\lambda }{(1+t)[ \lambda + c
(1+t)^{b}]}.
\end{split}
\end{equation}
Inequality \eqref{eqr11} follows from \eqref{eqtt1} and \eqref{eqt2}. 
From \eqref{eqr11} and Theorem~\ref{thm1} one obtains
\begin{equation}
\label{eqr14}
g(t)\le \frac{1}{\mu(t)} < \frac{1}{c},\qquad \forall t>0, 
\end{equation}
provided that  $g(0)< (c+\lambda)^{-1}$. 
From \eqref{eqr4} with $\gamma(t)=0$ and \eqref{eqr12}--\eqref{eqr14}, one gets
$\dot{g}(t) = O(\frac{1}{(1+t)^{1+b}})$. Thus, there exists  
finite limit $\lim_{t\to \infty}g(t) = g(\infty)\le c^{-1}$.

\end{document}